\documentclass[a4paper,11pt, twoside]{article}

\usepackage{latexsym}
\usepackage{amsmath}
\usepackage{ntheorem}
\usepackage{amssymb}
\usepackage{abstract}
\usepackage{titletoc}
\usepackage{enumerate}
\usepackage{graphicx}
\usepackage{subfigure}
\usepackage[english]{babel}
\usepackage{times}
\usepackage[T1]{fontenc}
\usepackage{theorem}
\usepackage{multicol}
\usepackage{graphicx}
\contentsmargin{1pt}

\dottedcontents{section}[2.3em]{}{2.3em}{5pt}
\dottedcontents{subsection}[5.5em]{}{3.2em}{5pt}

\newtheorem{theorem}{Theorem}[section] 
\newtheorem{definition}[theorem]{Definition} 
\newtheorem{lemma}[theorem]{Lemma} 
\newtheorem{corol}[theorem]{Corollary}

\newcommand{\card}{\rm{card}}
\newcommand{\Poo}{\mathrm{P}}
\newcommand{\Eoo}{\mathrm{E}}

\newenvironment{proof}{{{\noindent\it{Proof.}}\ \ }}{\hfill $\square$}

\setlength{\oddsidemargin}{5mm}
\setlength{\evensidemargin}{5mm}
\setlength{\textwidth}{150mm}
\setlength{\textheight}{220mm}
\setlength{\headsep}{10mm}

\usepackage{color}

\pagestyle{myheadings}
\markboth{{\sc A Tail Estimate for Empirical Processes of Gaussiani}}{{\sc Wen Huo and Yasutaka Shimizu}}

\title{A tail estimate for empirical processes of multivariate Gaussian under general dependence} 
\author{Wen Huo\footnote{E-mail: huowen120513@fuji.waseda.jp} \ and Yasutaka Shimizu\footnote{E-mail: shimizu@waseda.jp} \\
{\it Department of Applied Mathematics, Waseda University}}
\date{March 20, 2023}

\begin{document}
\maketitle

\begin{abstract}
In this paper, we discuss the convergence rate of empirical processes of Gaussian processes for a large class of function families. Our main goal is to show the tail of the random quantity $\sup_{f \in F}| \widehat{P_n}(f) - \Eoo[\widehat{P_n}(f)] |$ can be dominated by polynomials. We put forward the properties of Hermite polynomials which play a crucial role in the proof of main theorems. At the end of the paper, we show the expectation of the random quantity converges to zero at the rate of $o(n^{-\frac{1}{2}+\varepsilon})$, which is proven to be $o(n^{-\frac{1}{3}})$ in \cite{paper1}.

\begin{flushleft}
{\emph{Keywords}: Gaussian Process, Empirical Process, Concentration Inequality.}\vspace{1mm}\\
\emph{Mathematics Subject Classification 2020}: 60E15
\end{flushleft}

\end{abstract}

\section{Introduction}
\noindent In the field of probability theory, the tail estimate of a given stochastic process is one of the most basic and important problems. Specifically, if given a sequence of i.i.d random variables, for any bounded function family $F$ such that $0\in F$, we have$$\Eoo\sup_{f \in F}| \widehat{P_n}(f) - \Eoo[\widehat{P_n}(f)] |\leq C_2(F)n^{-\frac{1}{2}}+C_1(F)n^{-1},$$where $C_2$ and $C_1$ are two constants only relying on the selection of $F$; see \cite{tala}].\par
We define the size of a given empirical process as $\Eoo\sup_{f \in F}| \widehat{P_n}(f) - \Eoo[\widehat{P_n}(f)] |$. Since it is easy to show the upper bound of the convergence rate of $\Eoo\sup_{f \in F}| \widehat{P_n}(f) - \Eoo[\widehat{P_n}(f)] |$ cannot be better than $o(n^{-\frac{1}{2}})$, the result above tells us that under the condition of i.i.d., the structure of the size is extremely simple. To prove the theorem above, it is necessary to employ a famous result in tail estimate demonstrated as follows, which is the so-called Bernstein's inequality,$$\Poo(|\sum_i Y_i|\geq v)\leq 2\exp(-\min(\frac{v^2}{4\sum \Eoo Y_i^2},\frac{v}{2U}))$$ holds for any number $v\geq 0$ and any sequence of independent variables $Y_i (1\leq i\leq n)$.\par
Recalling that Bernstein's inequality is essentially necessary for the proof, one can observe intuitively that by modifying the independent condition of the given Gaussian process slightly, a similar inequality might still be provable. However, this problem is extremely difficult. A well-known consequence shows that an exponential bound still holds for all $L$-Lipschitz functions (see \cite{clas}). In 2017, a result established by Paouris and Valettas in \cite{conv} extended the previous result to all convex functions and improved the exponential bound by adopting a variance-sensitive form. However, these results are not even applicable to the most common case -- the indicators. Additionally, in this paper, we are going to prove the best convergence rate for indicators. Through some further assumptions, some papers show the Bernstein-like inequality holds for all measurable functions. For example, in \cite{mart}, the authors prove the inequality by employing the method of bounded martingale differences, and in \cite{markov}, a similar inequality for Markov chains is proven.\par
A good convergence rate still holds in some special cases without the establishment of Bernstein's inequality; see \cite{depe}, Chapter 1. Let $[0,1)$ be the unit interval equipped with Lebesgue measure and $\{x_k, k\geq 1\}$ be a sequence of random variables defined as $x_k := n_kx$, where $1 = n_1 \leq n_2 \leq n_3 \leq\cdots$, then exploiting the dynamical property of this sequence and some powerful tools from Fourier analysis, Roger Baker \cite{baker} proved $$D_n = o(n^{-\frac{1}{2}}(\log n)^{\frac{3}{2}+\varepsilon})$$holds for any $\varepsilon > 0.$\par
In our paper, we do not prove the Bernstein-like inequality but bring in a basic tool in the probability theory, which is called chaining argument or chaining method; see \cite{tala} or \cite{emtex}, to dominate the tail of empirical processes by polynomials. And this still implies the size of empirical processes converges at a rate close to the best one $O(n^{-\frac{1}{2}})$.

\section{Notations}
\noindent Throughout this paper, we adopt the notations from \cite{tala} and \cite{emtex}.
\begin{definition}[Empirical Process]
For a sequence of random variables $\{X_k\}$, the empirical process of it is defined as $$\widehat{P_n}(f) = \frac{1}{n}\sum_{k = 1}^{n}f(X_k),$$ where $f\in F$ and $F$ is a fixed set of measurable function. 
\end{definition}

\begin{definition}[Hermite Polynomials]
For $k \geq 0$, we define the $k^{th}$ Hermite polynomial $H_k$ by $$H_k = \frac{(-1)^{k}}{\sqrt{k!}}\exp(\frac{t^2}{2})\frac{d^k}{dt^k}\exp(-\frac{t^2}{2}).$$  
\end{definition}

\begin{definition}[Entropy Numbers]
Given a compact set $F$, the $n^{th}$ entropy number $e_n(F)$ is defined as follows, $$e_n(F) = \inf_{T_n\in F}\sup_{x\in F}d(x,T_n),$$ where $T_n$ represents any subset of $F$ with n elements.  
\end{definition}

\begin{definition}[Size]
We define the size of a given empirical process $\widehat{P_n}(f)$ as $\Eoo\sup_{f \in F}| \widehat{P_n}(f) - \Eoo(\widehat{P_n}(f)) |$. 
\end{definition}

\section{Properties of Hermite Polynomials}
\noindent We list three useful properties of Hermite polynomials without proof here. See the appendix for complete proof.
\begin{theorem}\label{pro1}
For real $\lambda$, $t$ and $k^{th}$ Hermite polynomials $H_k$, $$\exp(\lambda t - \frac{1}{2}\lambda^2) = \sum_{k=0}^{\infty}\frac{1}{\sqrt{k!}}H_k(t)\lambda^k.$$
\end{theorem}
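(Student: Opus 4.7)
The plan is to recognize Theorem~\ref{pro1} as the standard generating function identity for Hermite polynomials, and prove it by exhibiting the left-hand side as a Taylor expansion in $\lambda$ whose coefficients, after a bit of rewriting, are precisely $H_k(t)/\sqrt{k!}$.

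First I would complete the square in the exponent to observe
$$\exp\!\left(\lambda t - \tfrac{1}{2}\lambda^{2}\right) \;=\; \exp\!\left(\tfrac{t^{2}}{2}\right)\,\exp\!\left(-\tfrac{(t-\lambda)^{2}}{2}\right).$$
This is the pivotal algebraic identity; everything after it is essentially bookkeeping. Next I would expand the second factor on the right as a Taylor series in $\lambda$ about $\lambda=0$. Because $\lambda \mapsto \exp(-(t-\lambda)^{2}/2)$ is entire, the Taylor series converges for every $\lambda\in\mathbb{R}$, so no delicate convergence argument is needed; this disposes of what might at first look like an analytic subtlety.

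The second step is a simple chain-rule observation: since the variable $\lambda$ appears only through $t-\lambda$, we have
$$\frac{d^{k}}{d\lambda^{k}}\exp\!\left(-\tfrac{(t-\lambda)^{2}}{2}\right) \;=\; (-1)^{k}\,\frac{d^{k}}{dt^{k}}\exp\!\left(-\tfrac{(t-\lambda)^{2}}{2}\right),$$
and specializing to $\lambda=0$ turns the $k$-th Taylor coefficient into $(-1)^{k}(d^{k}/dt^{k})\exp(-t^{2}/2)/k!$. Multiplying through by $\exp(t^{2}/2)$ and matching with the definition $H_k(t)=\frac{(-1)^{k}}{\sqrt{k!}}\exp(t^{2}/2)\frac{d^{k}}{dt^{k}}\exp(-t^{2}/2)$ gives the coefficient $H_k(t)/\sqrt{k!}$ on $\lambda^{k}$, which is exactly the claim.

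In summary, the argument is: (i) complete the square to separate a $t$-only factor from an entire function of $\lambda$; (ii) Taylor-expand in $\lambda$; (iii) convert $\lambda$-derivatives at $\lambda=0$ into $t$-derivatives of $\exp(-t^{2}/2)$ via the sign flip from the chain rule; (iv) read off the Hermite polynomials. There is no real obstacle here; the only place one could stumble is tracking the $(-1)^{k}$ coming from $d/d\lambda = -d/dt$ and making sure it cancels the $(-1)^{k}$ sitting in the definition of $H_k$, leaving a clean positive coefficient $1/\sqrt{k!}$. Accordingly, I would write the proof compactly, emphasizing the completion-of-squares step and the chain-rule identity, and leaving the convergence remark as a one-line aside.
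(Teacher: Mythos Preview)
Your proposal is correct and essentially identical to the paper's proof: both complete the square to write $\exp(\lambda t - \tfrac{1}{2}\lambda^{2}) = \exp(\tfrac{t^{2}}{2})\exp(-\tfrac{(t-\lambda)^{2}}{2})$, then Taylor-expand the second factor and identify the coefficients via the definition of $H_k$. The only cosmetic difference is that the paper phrases the expansion as the Taylor series of $g(u)=e^{-u^{2}/2}$ about $u=t$ (and then substitutes $u=t-\lambda$), whereas you expand directly in $\lambda$ about $0$ and invoke the chain rule $\partial_\lambda = -\partial_t$; these are the same computation.
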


\begin{theorem}\label{pro2}
Let $\gamma$ be the Gaussian measure with density function $p(t) = \frac{1}{\sqrt{2\pi}\sigma}\exp(-\frac{t^2}{2\sigma^2})$. Then $$\{ H_k(\frac{t}{\sigma}): k\geq 0\}$$ is an orthonormal basis for $L^2(\gamma).$
\end{theorem}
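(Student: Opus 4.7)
The plan is to split the claim into two parts: orthonormality and completeness. First I would reduce to the standard Gaussian case ($\sigma = 1$) by the change of variables $s = t/\sigma$, since this converts the measure $\gamma$ into the standard Gaussian measure $\gamma_0$ and turns $H_k(t/\sigma)$ into $H_k(s)$. So it suffices to show that $\{H_k\}_{k \geq 0}$ is orthonormal and complete in $L^2(\gamma_0)$.

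For orthonormality, the natural route is to exploit the generating-function identity from Theorem \ref{pro1}. Writing the identity for two parameters $\lambda$ and $\mu$ and multiplying gives
\begin{equation*}
\exp\!\bigl((\lambda+\mu)t - \tfrac{1}{2}(\lambda^2+\mu^2)\bigr) = \sum_{j,k \geq 0} \frac{1}{\sqrt{j!\,k!}}\, H_j(t)\,H_k(t)\, \lambda^j \mu^k.
\end{equation*}
Integrating both sides against $\gamma_0$ and using the elementary moment generating function identity $\int e^{at}\,d\gamma_0(t) = e^{a^2/2}$, the left side collapses to $e^{\lambda\mu} = \sum_n \frac{\lambda^n \mu^n}{n!}$. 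Comparing coefficients of $\lambda^j \mu^k$ (which is justified since both sides are entire in $(\lambda,\mu)$) yields $\int H_j H_k \, d\gamma_0 = \delta_{jk}$. The interchange of summation and integration can be justified by dominated convergence, using the Gaussian tails to absolutely bound the series.

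For completeness, I would argue by contradiction: suppose $f \in L^2(\gamma)$ satisfies $\int f(t)\, H_k(t/\sigma)\, d\gamma(t) = 0$ for every $k \geq 0$. Since $H_k$ has exact degree $k$, the span $\{H_k(t/\sigma)\}_{k \geq 0}$ equals the space of all polynomials in $t$; hence all moments $\int f(t)\, t^k\, p(t)\, dt$ vanish. Setting $h(t) := f(t) p(t)$, Cauchy--Schwarz with the Gaussian moments of all orders shows $h \in L^1$ and indeed $\int |h(t)|\, e^{s|t|}\, dt < \infty$ for every $s \geq 0$. Consequently the Fourier transform $\widehat{h}(z) = \int h(t) e^{-izt}\, dt$ extends to an entire function of $z \in \mathbb{C}$, whose Taylor coefficients at $0$ are precisely multiples of the moments of $h$ and therefore all zero. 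Hence $\widehat{h} \equiv 0$, and by Fourier uniqueness on $L^1$ we get $h = 0$ a.e., so $f = 0$ in $L^2(\gamma)$ because $p > 0$.

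The main obstacle I expect is the completeness step, specifically justifying that vanishing of all moments of $h$ forces $\widehat{h} \equiv 0$. The delicate point is showing that $\widehat{h}$ is genuinely entire and that its power series around the origin converges everywhere to $\widehat{h}$; this requires a quantitative control of $\int |h(t)| t^k\, dt$ via the Gaussian moments $(2k-1)!!\, \sigma^{2k}$ and an application of dominated convergence to swap the infinite sum $\sum_k (-iz)^k t^k/k!$ with the integral. Once that analytic extension is in place, the rest is a standard invocation of Fourier inversion.
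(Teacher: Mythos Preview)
Your orthonormality argument is essentially identical to the paper's: both multiply two copies of the generating-function identity from Theorem~\ref{pro1}, integrate against the Gaussian measure, evaluate the left side as $e^{\lambda\mu}$ (or $e^{\sigma^2\lambda\mu}$ in the paper's unscaled version), and read off $\langle H_j,H_k\rangle=\delta_{jk}$ by matching coefficients. The only cosmetic difference is that you first rescale to $\sigma=1$, whereas the paper carries $\sigma$ through by substituting $t\mapsto t/\sigma$, $\lambda\mapsto\sigma\lambda$ directly in Theorem~\ref{pro1}.

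Where you genuinely go further is completeness. The paper's appendix stops at the coefficient-matching identity and declares the proof finished; it never addresses why the orthonormal system is \emph{total} in $L^2(\gamma)$. Your Fourier-analytic argument---vanishing moments of $h=fp$ force its Fourier transform, which extends to an entire function thanks to the Gaussian tail, to be identically zero---is the standard and correct way to fill this gap. So your proposal is not merely an alternative route but a strictly more complete proof: same method for orthonormality, plus the totality step the paper omits. The ``main obstacle'' you anticipate (justifying that $\widehat{h}$ is entire and that term-by-term integration is legitimate) is handled exactly as you sketch, via the finiteness of $\int |f|\,e^{s|t|}\,d\gamma$ for all $s$, which follows from Cauchy--Schwarz and the Gaussian exponential moments.
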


\begin{theorem}\label{pro3}
Suppose that a Gaussian vector $(U,V)$ satisfies
\begin{equation*}
(U,V)\sim N\left(0, \left( 
\begin{matrix}
\sigma_{11}&\sigma_{12}\\
\sigma_{12}&\sigma_{22}
\end{matrix}\right)\right). 
\end{equation*}

Then, for any positive integers $k,l\geq 0$, 
$$\Eoo(H_k(\frac{U}{\sqrt{\sigma_{11}}})H_l(\frac{V}{\sqrt{\sigma_{22}}})) = \delta_{k,l}(\frac{\sigma_{12}}{\sqrt{\sigma_{11}\sigma_{22}}})^k.$$

\end{theorem}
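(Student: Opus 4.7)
The plan is to use the generating function identity of Theorem \ref{pro1} applied to both coordinates simultaneously, compute the resulting expectation using the moment generating function of the bivariate Gaussian, and match coefficients of the two resulting power series in two parameters $\lambda,\mu$.

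First I would normalize: set $U' = U/\sqrt{\sigma_{11}}$ and $V' = V/\sqrt{\sigma_{22}}$, so that $(U',V')$ is a centered Gaussian vector with unit marginal variances and correlation $\rho := \sigma_{12}/\sqrt{\sigma_{11}\sigma_{22}}$. The claim becomes $\mathrm{E}[H_k(U')H_l(V')] = \delta_{k,l}\rho^k$.

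Next, for real parameters $\lambda,\mu$, I would apply Theorem \ref{pro1} in each coordinate to obtain
\begin{equation*}
\exp\!\Bigl(\lambda U' - \tfrac{1}{2}\lambda^2\Bigr)\exp\!\Bigl(\mu V' - \tfrac{1}{2}\mu^2\Bigr) = \sum_{k,l\ge 0}\frac{\lambda^k\mu^l}{\sqrt{k!\,l!}}\,H_k(U')H_l(V').
\end{equation*}
Taking expectations on both sides (justified by the fast decay of the Gaussian tails against the exponential, which makes Fubini/dominated convergence routine on the right), the left-hand side equals $\exp\!\bigl(-\tfrac{1}{2}(\lambda^2+\mu^2)\bigr)\,\mathrm{E}\!\exp(\lambda U'+\mu V')$. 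Since $\lambda U'+\mu V'$ is centered Gaussian with variance $\lambda^2+\mu^2+2\rho\lambda\mu$, its moment generating function equals $\exp\!\bigl(\tfrac{1}{2}(\lambda^2+\mu^2+2\rho\lambda\mu)\bigr)$, so the left-hand side simplifies to $\exp(\rho\lambda\mu) = \sum_{k\ge 0}\frac{(\rho\lambda\mu)^k}{k!}$.

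Finally, I would equate the two power series in $\lambda,\mu$. The diagonal terms give $\tfrac{1}{k!}\mathrm{E}[H_k(U')H_k(V')] = \tfrac{\rho^k}{k!}$, hence $\mathrm{E}[H_k(U')H_k(V')] = \rho^k$, while all off-diagonal coefficients on the right vanish, forcing $\mathrm{E}[H_k(U')H_l(V')] = 0$ when $k\neq l$. Combining the two cases yields the stated formula. The only mildly subtle step is the exchange of sum and expectation for the double series; this is the step I expect to be the main technical obstacle, but it is handled by noting that $\sum_{k,l}\frac{|\lambda|^k|\mu|^l}{\sqrt{k!l!}}|H_k(U')H_l(V')|$ is integrable thanks to the Cauchy--Schwarz bound and the orthonormality of $\{H_k\}$ in $L^2(\gamma)$ given by Theorem \ref{pro2}.
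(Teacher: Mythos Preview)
Your proof is correct and follows exactly the route the paper indicates: the paper's proof of Theorem~\ref{pro3} simply says ``adopt the same routine as the proof of~\ref{pro2},'' and your argument---applying the generating-function identity of Theorem~\ref{pro1} in each coordinate, computing the joint moment generating function to obtain $\exp(\rho\lambda\mu)$, and matching coefficients---is precisely that routine carried out in the bivariate setting. Your added remark on justifying the interchange of sum and expectation via Cauchy--Schwarz and the orthonormality of Theorem~\ref{pro2} is more explicit than the paper, but the overall approach is the same.
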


\section{Main Theorem}
\noindent Suppose that $\gamma$ is a Gaussian measure with a fixed variance $\sigma$ and $\{ X_n, n>0 \}$ is a centered Gaussian process such that each $X_n$ obeys $X_n\sim \mathcal{N}(0,\sigma)$ in the rest of the article.
\begin{lemma}\label{lemm}
Let $f, g$ be two arbitrary functions in $L^2(\gamma)$ with expectation 0. Then $$||\widehat{P_n}(f) - \widehat{P_n}(g)||_2 \leq \frac{\sqrt{\Delta_n}}{n}||f-g||_2,$$ where $\Delta_n = \sum_{i,j = 1}^n d_{ij}$ and $d_{ij} = \frac{\sigma_{ij}}{\sigma}.$
\end{lemma}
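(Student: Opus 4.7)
The plan is to reduce the squared $L^2$ norm on the left to a double sum of pairwise covariances $\Eoo[(f-g)(X_i)(f-g)(X_j)]$, and then use the Hermite machinery from Section 3 to turn each such covariance into an explicit expression involving only $d_{ij}$ and $\|f-g\|_2$. Writing $h := f-g$ (which lies in $L^2(\gamma)$ and satisfies $\Eoo_{\gamma} h = 0$), one immediately has
$$\|\widehat{P_n}(f)-\widehat{P_n}(g)\|_2^2 = \frac{1}{n^2}\sum_{i,j=1}^n \Eoo\bigl[h(X_i)h(X_j)\bigr],$$
so the whole problem reduces to controlling the individual covariances $\Eoo[h(X_i)h(X_j)]$.

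The next step is to expand $h$ in the orthonormal Hermite basis furnished by Theorem~\ref{pro2}. Writing $h = \sum_{k \geq 0} a_k H_k(\cdot/\sqrt{\sigma})$ in $L^2(\gamma)$, the mean-zero hypothesis forces $a_0 = 0$, and Parseval gives $\sum_{k \geq 1} a_k^2 = \|h\|_2^2$. Substituting into $\Eoo[h(X_i)h(X_j)]$ and using continuity of the $L^2(\Omega)$ inner product, the joint-moment formula in Theorem~\ref{pro3} (applied to the bivariate Gaussian $(X_i,X_j)$ with common variance $\sigma$ and covariance $\sigma_{ij}$) collapses the double series to
$$\Eoo\bigl[h(X_i)h(X_j)\bigr] = \sum_{k \geq 1} a_k^2\, d_{ij}^k.$$
Since $|d_{ij}|\leq 1$ by Cauchy--Schwarz on covariances, one has $|d_{ij}|^k \leq |d_{ij}|$ for every $k \geq 1$, so $|\Eoo[h(X_i)h(X_j)]| \leq |d_{ij}|\,\|h\|_2^2$. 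Summing over $i,j$, dividing by $n^2$, and taking the square root yields the desired bound.

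The main obstacle I anticipate is justifying the termwise manipulation that reduces the double series. The Hermite expansion of $h$ converges in $L^2(\gamma)$, which translates into $L^2(\Omega)$ convergence of the partial sums $\sum_{k \leq N} a_k H_k(X_i/\sqrt{\sigma})$ to $h(X_i)$ because the law of each $X_i$ is $\gamma$, and similarly for $X_j$; continuity of the bilinear $L^2(\Omega)$ pairing then lets us pass the sum outside the expectation so that Theorem~\ref{pro3} can be applied one Hermite pair at a time. A second, minor subtlety is the sign of $d_{ij}$: the clean inequality above is really in terms of $|d_{ij}|$, so the statement should be read under the implicit convention (consistent with $\sqrt{\Delta_n}$ being well-defined) that either $d_{ij}\geq 0$ or $\Delta_n$ is understood as $\sum_{i,j}|d_{ij}|$.
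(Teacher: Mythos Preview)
Your argument is correct and follows essentially the same route as the paper: expand $h=f-g$ in the Hermite basis, apply Theorem~\ref{pro3} to collapse the cross terms, and bound $d_{ij}^k$ by $d_{ij}$ before summing. Your added remarks on justifying the sum--expectation interchange via $L^2$ continuity and on the implicit sign convention for $d_{ij}$ are points the paper leaves tacit, but they do not change the structure of the proof.
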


\begin{proof}
Combining the definition of $\widehat{P_n}(f)$ and Theorem \ref{pro2}, we obtain $$f(X_i) - g(X_i) = \sum_{k\geq 1}c_kH_k(\frac{X_i}{\sigma}),$$and
\begin{equation*}
\begin{aligned}
\Eoo[\widehat{P_n}(f) - \widehat{P_n}(g)]^2
&=\frac{1}{n^2}\Eoo[\sum_{k,l,i,j}c_kc_lH_k(\frac{X_i}{\sigma})H_l(\frac{X_j}{\sigma})] \\
&=\frac{1}{n^2}\sum_{k,i,j}c_k^2d_{ij}^k \\
&\leq \frac{\Delta_n}{n^2}\sum_kc_k^2 =\frac{\Delta_n}{n^2}||f-g||_2^2.
\end{aligned} 
\end{equation*}
\end{proof}

Now we are able to prove the following tail inequality. 

\begin{theorem}
Suppose that $F$ is a compact subset in $L^2(\gamma)$ with entropy numbers $\{e_n\}$ and $\{0\}=\pi_0\subset \pi_1\subset\cdots\subset\pi_n\subset\cdots$ is a sequence of nets in $F$ with $\card(\pi_n)=J_n$. Assume that $\lim_{n\to\infty}d(f,\pi_n) = 0$ holds for any $f\in F$. Then, we have $$\Poo(\sup_{f\in F}|\widehat{P_n}(f) - E[\widehat{P_n}(f)]|\geq \lambda)\leq C_1^2(e,q)C_2(\pi,q)\frac{\Delta_n}{\lambda^2n^2},$$ where $C_1, C_2$ are two constants depending only on nets $\pi$, entropy numbers $e_n$ and a non-decreasing sequence of positive numbers $q_n$ taken arbitrarily.
\end{theorem}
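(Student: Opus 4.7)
The plan is to run a chaining argument with respect to the $L^2(\gamma)$ pseudometric, using Lemma \ref{lemm} as the only moment estimate and Chebyshev's inequality as the tail bound (this is consistent with the fact that the claimed bound is polynomial in $\lambda$ rather than exponential). First, for every $f\in F$ and every level $k\geq 0$, I would let $\pi_k(f)\in\pi_k$ be a nearest-point projection of $f$ onto the net $\pi_k$, so that $\|f-\pi_k(f)\|_2$ is controlled by $e_k$ (up to an absolute constant reflecting how the nets relate to the entropy numbers). Because $\pi_0=\{0\}$ we have $\widehat{P_n}(\pi_0(f))=0$, and because $d(f,\pi_k)\to 0$ Lemma \ref{lemm} gives $L^2$-convergence of the telescoping identity
\begin{equation*}
\widehat{P_n}(f)-\Eoo[\widehat{P_n}(f)]=\sum_{k\geq 1}\Bigl(\widehat{P_n}(\pi_k(f))-\widehat{P_n}(\pi_{k-1}(f))-\Eoo\bigl[\widehat{P_n}(\pi_k(f))-\widehat{P_n}(\pi_{k-1}(f))\bigr]\Bigr).
\end{equation*}

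Next, at each level $k$ the pair $(\pi_k(f),\pi_{k-1}(f))$ takes at most $J_kJ_{k-1}$ values as $f$ ranges over $F$, while $\|\pi_k(f)-\pi_{k-1}(f)\|_2\leq 2e_{k-1}$ by the triangle inequality. Combining a union bound over these pairs with Chebyshev's inequality and Lemma \ref{lemm} will give
\begin{equation*}
\Poo\Bigl(\sup_{f\in F}\bigl|\widehat{P_n}(\pi_k(f))-\widehat{P_n}(\pi_{k-1}(f))-\Eoo[\cdots]\bigr|\geq t\Bigr)\leq \frac{4\,J_kJ_{k-1}\,e_{k-1}^2\,\Delta_n}{t^2\,n^2}.
\end{equation*}
Then I would split $\lambda=\sum_{k\geq 1}\lambda_k$ by taking $\lambda_k$ proportional to $q_ke_{k-1}$, namely $\lambda_k=\lambda\,q_ke_{k-1}/C_1(e,q)$ with $C_1(e,q):=\sum_{j\geq 1}q_je_{j-1}$. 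Applying the level-$k$ estimate with $t=\lambda_k$ and summing over $k$ produces
\begin{equation*}
\Poo\Bigl(\sup_{f\in F}\bigl|\widehat{P_n}(f)-\Eoo[\widehat{P_n}(f)]\bigr|\geq \lambda\Bigr)\leq \frac{4\,C_1(e,q)^2\,\Delta_n}{\lambda^2\,n^2}\sum_{k\geq 1}\frac{J_kJ_{k-1}}{q_k^2},
\end{equation*}
and setting $C_2(\pi,q):=\sum_{k\geq 1}J_kJ_{k-1}/q_k^2$ matches the target, the numerical factor $4$ being absorbed into $C_1$.

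The step I expect to be the main obstacle is clarifying the link between the nets $\pi_k$ and the entropy numbers $e_k$: a net of cardinality $J_k$ only guarantees the covering radius $\sup_fd(f,\pi_k)\geq e_{J_k}$, so either the nets have to be chosen to realise the entropy numbers (up to an absolute constant), or $e_k$ inside $C_1(e,q)$ has to be reinterpreted as $e_{J_k}$. A secondary, routine issue is the measurability of the outer supremum and the $L^2$-convergence of the telescoping series; both are handled by the compactness of $F$ together with the continuity of $f\mapsto\widehat{P_n}(f)$ in the $L^2$-metric, which is precisely the content of Lemma \ref{lemm}.
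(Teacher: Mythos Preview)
Your proposal is correct and follows essentially the same route as the paper: telescoping along the nets, applying Lemma~\ref{lemm} with Chebyshev at each level, and a union bound over the pairs $(\pi_k(f),\pi_{k-1}(f))$. The paper packages the argument slightly differently---it rescales the per-level threshold to $\lambda q_k\|\pi_{k+1}(f)-\pi_k(f)\|_2\sqrt{\Delta_n}/n$ and works on the global good event $\Omega_\lambda$, rather than splitting $\lambda=\sum_k\lambda_k$ in advance---but the two formulations are equivalent. Its $C_2$ is defined with the actual number $N_k$ of distinct pairs at level $k$ (of which your $J_kJ_{k-1}$ is the trivial upper bound), and its $C_1$ uses the entropy numbers $e_{J_k}$ indexed by the net cardinalities; the paper resolves the net/entropy-number link you flagged simply by stipulating that the $\pi_k$ be chosen to realise the covering radii, exactly as you anticipated.
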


\begin{proof}
Without loss of generality, we suppose $\Eoo f = 0$ for any $f\in F$. Let $\pi_n(f)$ denote the closest element to $f$ in $\pi_n$. Since $\lim_{n\to\infty}d(f,\pi_n) = 0$, it is obvious that $\forall f\in F$, we have $$f = \sum_{k\geq 0}(\pi_{k+1}(f)-\pi_k(f)).$$ Then, $$|f|\leq\sum_{k\geq 0}|\pi_{k+1}(f)-\pi_k(f)|,$$ and
 \begin{equation}\label{summ}
|\widehat{P_n}(f)|\leq\sum_{k\geq 0}|\widehat{P_n}(\pi_{k+1}(f))-\widehat{P_n}(\pi_{k}(f))|.
\end{equation}
Lemma \ref{lemm} implies $$\Poo(|\widehat{P_n}(\pi_{k+1}(f))-\widehat{P_n}(\pi_{k}(f))|>\lambda)\leq\frac{\Delta_n||\pi_{k+1}(f)-\pi_{k}(f)||_2^2}{n^2\lambda^2}.$$ Applying variable substitution, we obtain $$\Poo(|\widehat{P_n}(\pi_{k+1}(f))-\widehat{P_n}(\pi_{k}(f))|>\lambda q_k||\pi_{k+1}(f)-\pi_{k}(f)||_2\frac{\sqrt{\Delta_n}}{n})\leq\frac{1}{\lambda^2q_k^2},$$ where $q_k$ is a non-decreasing sequence of positive numbers taken arbitrarily. Then, we denote by $\Omega_\lambda$ the set such that $$\forall k\geq 0, \forall f\in F, |\widehat{P_n}(\pi_{k+1}(f))-\widehat{P_n}(\pi_{k}(f))|\leq\lambda q_k||\pi_{k+1}(f)-\pi_{k}(f)||_2\frac{\sqrt{\Delta_n}}{n} $$ and observe $$P(\Omega_\lambda^c)\leq\sum_{k\geq 0}\frac{N_k}{\lambda^2q_k^2},$$where $N_k$ is the number of different pairs $(\pi_{k+1}(f),\pi_k(f))$ while taking different $f$.\par
Here we define $C_1,C_2$ by $$C_1(e,q):=2\sum_{k\geq 0}e_{j_k}(F)q_k$$ and $$C_2(\pi,q):=\sum_{k\geq 0}\frac{N_k}{q_k^2}.$$When $\Omega_\lambda$ occurs, formula (\ref{summ}) implies $$\forall f\in F, |\widehat{P_n}(f)|\leq\sum_{k\geq 0}\lambda q_k||\pi_{k+1}(f)-\pi_{k}(f)||_2\frac{\sqrt{\Delta_n}}{n}.$$ Thus,  $$\sup_{f\in F}|\widehat{P_n}(f)|\leq\sup_{f\in F}\sum_{k\geq 0}\lambda q_k||\pi_{k+1}(f)-\pi_{k}(f)||_2\frac{\sqrt{\Delta_n}}{n},$$ which implies 
\begin{equation*}
\begin{aligned}
\Poo(\sup_{f\in F}|\widehat{P_n}(f)|\geq\sum_{k\geq 0}\lambda q_k\sup_{f\in F}||\pi_{k+1}(f)-\pi_{k}(f)||_2\frac{\sqrt{\Delta_n}}{n})&\leq \Poo(\Omega_\lambda^c)\\
&\leq \frac{C_2(\pi,q)}{\lambda^2}.
\end{aligned} 
\end{equation*}
Further, since $$\sum_{k\geq 0}\lambda q_k\sup_{f\in F}||\pi_{k+1}(f)-\pi_{k}(f)||_2\frac{\sqrt{\Delta_n}}{n}\leq 2\sum_{k\geq 0}\lambda q_k\sup_{f\in F}||f-\pi_{k}(f)||_2\frac{\sqrt{\Delta_n}}{n},$$ by choosing $\pi_n$ properly, we obtain $$\Poo(\sup_{f\in F}|\widehat{P_n}(f)|\geq\lambda C_1(e,q)\frac{\sqrt{\Delta_n}}{n})\leq\frac{C_2(\pi,q)}{\lambda^2}.$$ This implies the theorem.
\end{proof}

\begin{corol}
The size of the empirical process above satisfies $$\Eoo(\sup_{f\in F}|\widehat{P_n}(f)-\Eoo[\widehat{P_n}(f)]|)\leq 2C_1\sqrt{C_2}\frac{\sqrt{\Delta_n}}{n}.$$
\end{corol}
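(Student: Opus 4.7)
The plan is to deduce the expectation bound from the tail inequality of the preceding theorem via the layer-cake identity
$$\Eoo Z = \int_0^\infty \Poo(Z\geq t)\,dt,$$
applied to the nonnegative random variable $Z:=\sup_{f\in F}|\widehat{P_n}(f)-\Eoo[\widehat{P_n}(f)]|$. This is the standard route from a polynomial tail estimate to a moment estimate, and it fits the present setup cleanly because the tail decays like $1/t^2$, which is integrable away from the origin.

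First, I would rewrite the theorem's bound in the form most convenient for integration. The theorem states that $\Poo(Z\geq \lambda C_1\sqrt{\Delta_n}/n)\leq C_2/\lambda^2$; setting $t=\lambda C_1\sqrt{\Delta_n}/n$ gives
$$\Poo(Z\geq t)\leq \frac{C_1^2 C_2\,\Delta_n}{n^2 t^2}.$$
Since any probability is at most $1$, we may improve this to
$$\Poo(Z\geq t)\leq \min\!\Bigl(1,\ \frac{C_1^2 C_2\,\Delta_n}{n^2 t^2}\Bigr).$$
Let $t_0:=C_1\sqrt{C_2}\sqrt{\Delta_n}/n$ be the crossover point where the two competing bounds coincide.

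Second, I would split the layer-cake integral at $t_0$: on $[0,t_0]$ use the trivial bound $1$, contributing exactly $t_0$; on $[t_0,\infty)$ use the quadratic bound, whose integral is $C_1^2 C_2\Delta_n/(n^2 t_0)=t_0$ as well. Summing gives $\Eoo Z\leq 2t_0 = 2C_1\sqrt{C_2}\sqrt{\Delta_n}/n$, which is the stated inequality.

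I do not foresee any real obstacle: the argument is a routine tail-to-moment conversion, and the only mild subtlety is to truncate the bound at $1$ before integrating, otherwise the integral over small $t$ would diverge. The choice of the cut-off $t_0$ is dictated by equating the two bounds, and the symmetry of the two contributions is exactly what produces the clean constant $2$ in the corollary.
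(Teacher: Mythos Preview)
Your proposal is correct and is essentially identical to the paper's own proof: both apply the layer-cake formula, bound the integrand by $1$ on $[0,u]$ and by the quadratic tail on $[u,\infty)$, and then choose the cut-off $u=t_0=C_1\sqrt{C_2}\,\sqrt{\Delta_n}/n$ to balance the two contributions. The paper simply writes ``by taking $u$ properly'' where you explicitly identify $t_0$ as the crossover point.
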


\begin{proof}
Suppose $\forall f\in F, \Eoo (f) = 0$. Since $\Eoo X = \int_{0}^{\infty}\Poo(X>u)du$ holds for any variable $X\geq 0$, applying main theorem, we obtain
\begin{equation*}
\begin{aligned}
\Eoo(\sup_{f\in F}|\widehat{P_n}(f)|)&\leq \int_{0}^{u}\Poo(\sup_{f\in F}|\widehat{P_n}(f)|>x)dx+\int_{u}^{\infty}C_1^2C_2\frac{\Delta_n}{x^2n^2}dx\\
&\leq u+C_1^2C_2\frac{\Delta_n}{un^2}
\end{aligned} 
\end{equation*}
By taking $u$ properly, we get $$\Eoo(\sup_{f\in F}|\widehat{P_n}(f)|)\leq 2C_1\sqrt{C_2}\frac{\sqrt{\Delta_n}}{n}.$$
\end{proof}

Note that if $C_1, C_2$ are two finite numbers, we can dominate the size of $\widehat{P_n}(f)$ by $O(\frac{\sqrt{\Delta_n}}{n})$. However, even in the case of $F = \{1_{\{t\leq x\}}-\Phi(x):x\in\mathbb{R}\}$, we cannot take a sequence of $q_k$ directly such that $C_1, C_2$ are convergent.

\section{Upper Bound for Empirical Processes under General Dependence}

\noindent In this section, we are going to prove that for the set of indicators, the size of the empirical process can still be dominated by $O(\frac{\sqrt{\Delta_n}}{n})$, and we suppose $F$ is the set of indicators in the rest of article.

\begin{lemma}\label{le222}
For any sufficiently small $\delta > 0$, there exists a finite subset $F_\delta\in F$ such that $$\Eoo(\sup_{f\in F_\delta}|\widehat{P_n}(f)|)\leq C(\log\frac{1}{\delta})^{\frac{3}{2}}\frac{\sqrt{\Delta_n}}{n},$$ and $$\sup_{f\in F}d(f,F_\delta)\leq 2\delta.$$
\end{lemma}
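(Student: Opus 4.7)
The plan is to construct an explicit $2\delta$-net $F_\delta$ by quantizing via the Gaussian quantile function, equip $F_\delta$ with a nested dyadic sequence of sub-nets, and then apply the corollary of the main theorem with a carefully tuned sequence $q_k$ so as to get $C_1\sqrt{C_2}\lesssim (\log 1/\delta)^{3/2}$.

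First I would build the net. For any indicators $f=1_{\{t\le x\}}-\Phi(x)$ and $g=1_{\{t\le y\}}-\Phi(y)$ a direct variance computation gives $\|f-g\|_2^2 = |\Phi(x)-\Phi(y)|(1-|\Phi(x)-\Phi(y)|)\le |\Phi(x)-\Phi(y)|$. Thus, choosing $N=\lceil 1/(4\delta^2)\rceil$ and points $x_i=\Phi^{-1}(i/N)$ for $0\le i\le N$, the set $F_\delta=\{1_{\{t\le x_i\}}-\Phi(x_i)\}$ is a finite subset of $F$ with $\sup_{f\in F}d(f,F_\delta)\le 2\delta$.

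Second, set $K=\lceil\log_2 N\rceil\sim 2\log_2(1/\delta)$ and build a nested dyadic chain $\{0\}=\pi_0\subset \pi_1\subset\cdots\subset\pi_K=F_\delta$ where $\pi_k$ keeps every $2^{K-k}$-th element of $F_\delta$, so $J_k=|\pi_k|=2^k$. For $f\in F_\delta$ the nearest element of $\pi_k$ is within $2^{K-k-1}$ positions, so using the variance formula above, $d(f,\pi_k)^2\lesssim 2^{K-k}\cdot 4\delta^2\lesssim 2^{-k}$, giving the entropy estimate $e_{J_k}(F_\delta)\le c\, 2^{-k/2}$. Because the partition is dyadic and nested, every $\pi_{k+1}$-cell lies inside a single $\pi_k$-cell, so the pair $(\pi_{k+1}(f),\pi_k(f))$ is determined by the $\pi_{k+1}$-cell alone (taking the "tree projection" $\pi_k(f):=\pi_k(\pi_{k+1}(f))$ if needed); this yields $N_k\le J_{k+1}=2^{k+1}$.

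Third, apply Corollary 4.3 to $F_\delta$ with the choice $q_k:=2^{k/2}$ for $0\le k\le K-1$ (beyond $K$ the terms vanish since $\pi_{k+1}(f)=\pi_k(f)=f$). Then
\begin{equation*}
C_1(e,q)=2\sum_{k=0}^{K-1} e_{J_k}(F_\delta)\,q_k\le 2c\sum_{k=0}^{K-1} 2^{-k/2}\cdot 2^{k/2}\le 2cK,
\end{equation*}
\begin{equation*}
C_2(\pi,q)=\sum_{k=0}^{K-1}\frac{N_k}{q_k^2}\le \sum_{k=0}^{K-1}\frac{2^{k+1}}{2^{k}}=2K.
\end{equation*}
Hence $C_1\sqrt{C_2}\le 2c\sqrt{2}\,K^{3/2}\lesssim (\log(1/\delta))^{3/2}$, and Corollary 4.3 gives the desired inequality $\Eoo\sup_{f\in F_\delta}|\widehat{P_n}(f)|\le C(\log(1/\delta))^{3/2}\sqrt{\Delta_n}/n$.

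The main obstacle is the bound $N_k\le J_{k+1}$: with the "nearest-point" definition $\pi_k(f)$ the trivial bound is only $J_kJ_{k+1}$, which would spoil $C_2$. The fix is to use the tree-structured projection $\pi_k(f):=\pi_k(\pi_{k+1}(f))$, which telescopes properly and, thanks to the nested dyadic structure, keeps $d(f,\pi_k)\lesssim 2^{-k/2}$; one must verify that this alternative projection remains compatible with the identity $f=\sum_k(\pi_{k+1}(f)-\pi_k(f))$ used in the proof of the main theorem, which holds because $\pi_K(f)=f$ and $\pi_0(f)=0$. The delicate balancing of the exponent of $2$ in $q_k$ is what produces exactly the $(\log 1/\delta)^{3/2}$ rate matching the intuition from Baker's bound mentioned in the introduction.
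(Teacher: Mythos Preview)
Your proposal is correct and follows essentially the same approach as the paper: a dyadic quantile net, nested sub-nets $\pi_k$ of cardinality $\sim 2^k$, the balancing choice $q_k=2^{k/2}$, and then Corollary~4.3 to obtain $C_1\sqrt{C_2}\lesssim K^{3/2}\lesssim(\log(1/\delta))^{3/2}$. The only notable differences are that the paper takes $\sim 1/\delta$ grid points (so that $|\Phi(x)-\Phi(\alpha_i)|\le 2\delta$, which is the quantity actually used in the next lemma) whereas you take $\sim 1/\delta^2$ points to secure the literal $L^2$ bound $d(f,F_\delta)\le 2\delta$---this changes $K$ only by a constant factor---and your discussion of the $N_k\le C\,2^k$ issue via tree projections is more explicit than the paper's bare assertion (in this one-dimensional dyadic setup the nearest-point projection already gives $N_k\le 2J_{k+1}$, so the obstacle you flag does not actually bite here).
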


\begin{proof}
Choose an integer $m$ such that $\frac{1}{2^{m+1}}<\delta\leq\frac{1}{2^m}$ and $$F_\delta = \{1_{\{x\leq\alpha_i\}}-\frac{i}{2^m}:0\leq i\leq 2^m\}$$ where $\alpha_i$ is a real number such that $\Phi(\alpha_i) = \frac{i}{2^m}.$ Choose a sequence of nets by $$\{0\}=\pi_0\subset \pi_1\subset\cdots\subset\pi_m = \pi_{m+1} =\cdots$$ where $$\pi_k(k\leq m) = \{1_{\{x\leq\alpha_{i2^{m-k}}\}}-\frac{i}{2^k}:0\leq i\leq 2^k\}.$$
It is obvious that $J_k(0<k\leq m) = 1+2^k$ and $J_k(k>m) = J_m$. As a result, we have $$C_2 = C(\sum_{k = 0}^{m}\frac{2^k}{q_k^2}+\sum_{k>m}\frac{2^m}{q_k^2})$$ since there is a constant such that $N_k\leq C2^k$ for $k\leq m$. Similarly, we have $$C_1 = \sum_{k=0}^{m}\frac{q_k}{\sqrt{2^k}}.$$ By main theorem, since we can assign an arbitrarily huge value to $q_k$ for $k>m$, we obtain $$\Eoo\sup_{f\in F_\delta}|\widehat{P_n}(f)|\leq C\sqrt{(\sum_{k = 0}^{m}\frac{2^k}{q_k^2})}(\sum_{k=0}^{m}\frac{q_k}{\sqrt{2^k}})\frac{\sqrt{\Delta_n}}{n}.$$ By choosing $q_k$ properly, the inequality above implies $$\Eoo\sup_{f\in F_\delta}|\widehat{P_n}(f)|\leq Cm^{\frac{3}{2}}\frac{\sqrt{\Delta_n}}{n}.$$ As $\frac{1}{2^m}\geq \delta$, we have $$m\leq \log(\frac{1}{\delta}).$$
\end{proof}

\begin{lemma}\label{kl}
For any sufficiently small $\delta > 0$, the following inequality holds, $$\Eoo(\sup_{f\in F}|\widehat{P_n}(f)|)\leq C((\log\frac{1}{\delta})^{\frac{3}{2}}\frac{\sqrt{\Delta_n}}{n}+\delta).$$
\end{lemma}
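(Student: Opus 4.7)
The plan is to combine Lemma \ref{le222} with a bracketing argument that leverages the monotonicity of the indicators in $F$. For any $f\in F$, let $\pi(f)\in F_\delta$ denote a nearest element; then the triangle inequality yields
$$\sup_{f\in F}|\widehat{P_n}(f)| \leq \sup_{f\in F_\delta}|\widehat{P_n}(f)| + \sup_{f\in F}|\widehat{P_n}(f) - \widehat{P_n}(\pi(f))|.$$
The first summand is immediately controlled by Lemma \ref{le222}, giving the $(\log 1/\delta)^{3/2}\sqrt{\Delta_n}/n$ contribution; everything reduces to bounding the approximation term on the right.

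For the second term I would not rely on the abstract $L^2$ bound $d(f,F_\delta)\leq 2\delta$, but instead exploit the explicit structure $F_\delta=\{g_i:=1_{\{t\leq\alpha_i\}}-i/2^m:0\leq i\leq 2^m\}$ used in Lemma \ref{le222}. Given $f=1_{\{t\leq x\}}-\Phi(x)$ with $\alpha_i\leq x\leq\alpha_{i+1}$, I would take $\pi(f)=g_i$. Pointwise monotonicity yields $1_{\{t\leq\alpha_i\}}\leq 1_{\{t\leq x\}}\leq 1_{\{t\leq\alpha_{i+1}\}}$, while by construction $0\leq \Phi(x)-i/2^m\leq 1/2^m$. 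Plugging these sandwiches into $\widehat{P_n}(f)-\widehat{P_n}(g_i)$ gives
$$|\widehat{P_n}(f) - \widehat{P_n}(g_i)| \leq |\widehat{P_n}(g_{i+1}) - \widehat{P_n}(g_i)| + \frac{1}{2^m}.$$

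Taking the supremum over $f$ (equivalently over $i$) bounds the right-hand side by $2\sup_{g\in F_\delta}|\widehat{P_n}(g)| + 1/2^m$. Since $1/2^m\leq 2\delta$ by the choice of $m$ in Lemma \ref{le222}, taking expectations and reapplying Lemma \ref{le222} gives
$$\Eoo\sup_{f\in F}|\widehat{P_n}(f)|\leq 3C(\log\tfrac{1}{\delta})^{3/2}\tfrac{\sqrt{\Delta_n}}{n}+2\delta,$$
which is the desired inequality after absorbing constants.

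The main obstacle is the first, conceptual step: the $L^2$-approximation bound from Lemma \ref{le222} alone does \emph{not} translate into a bound on $\Eoo|\widehat{P_n}(f-\pi(f))|$, because without independence we have no general mechanism to convert $L^2$ smallness of $f-\pi(f)$ into $L^1$ smallness of the empirical average. What saves us is that $F$ consists of indicators, so the monotonicity-based bracketing lets us replace the approximation error with a genuine empirical-process increment between neighboring net points, which is itself controlled by Lemma \ref{le222}.
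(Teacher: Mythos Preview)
Your argument is correct and rests on the same two ingredients as the paper --- Lemma~\ref{le222} for the finite net $F_\delta$, and the pointwise monotonicity of indicators for bracketing --- but your implementation is noticeably more direct. The paper introduces an auxiliary step process $\widehat{Q_n}(\alpha)=\widehat{P_n}(1_{\{t\le\alpha_i\}})$ on $[\alpha_{i-1},\alpha_i)$, proves the sandwich $\widehat{Q_n}(\alpha-2\delta)\le\widehat{P_n}(1_{\{t\le\alpha\}})\le\widehat{Q_n}(\alpha+2\delta)$, and then treats the positive and negative parts of the deviation separately before recombining. You bypass all of that: the pointwise inequality $-2^{-m}\le\widehat{P_n}(f)-\widehat{P_n}(g_i)\le\widehat{P_n}(g_{i+1})-\widehat{P_n}(g_i)+2^{-m}$ immediately reduces the approximation error to an increment between adjacent net points, so a single further application of Lemma~\ref{le222} (picking up an overall factor~$3$) finishes the proof. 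The gain is purely expository --- both routes produce the same bound up to constants --- but your version avoids the auxiliary process and the positive/negative-part splitting entirely.
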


\begin{proof}
For convenience, we represent $\widehat{P_n}(f)$ by $\widehat{P_n}(\alpha)$ when $f=1_{\{t\leq\alpha\}}-\Phi(\alpha)$. Suppose $$F_\delta = \{1_{\{x\leq\alpha_i\}}-\Phi(\alpha_i):0 = \alpha_0 < \alpha_1< \alpha_2<\cdots \}$$ which is the subset of $F$ defined in Lemma \ref{le222}. Define a new process by $$\widehat{Q_n}(\alpha) = \widehat{P_n}(1_{\{t\leq\alpha_i\}}), \alpha_{i-1}\leq\alpha < \alpha_i.$$ Since $$\widehat{P_n}(\alpha) = \widehat{P_n}(1_{\{t\leq\alpha\}})-\Phi(\alpha),$$ by the definition of $\alpha_i$, we have 
\begin{equation*}
\begin{aligned}
\Eoo(\sup_{\alpha}|\widehat{Q_n}(\alpha)-\Eoo\widehat{P_n}(1_{t\leq\alpha})|)&\leq \Eoo(\sup_{\alpha}|\widehat{Q_n}(\alpha)-\Eoo\widehat{P_n}(1_{t\leq\alpha_i})|) + 2\delta\\
&\leq \Eoo(\sup_{\alpha_i}|\widehat{P_n}(\alpha_i)|) + 2\delta\\
&\leq C((\log\frac{1}{\delta})^{\frac{3}{2}}\frac{\sqrt{\Delta_n}}{n}+\delta).
\end{aligned} 
\end{equation*}

Further, we have $$\Eoo(\sup_{\alpha}|\widehat{Q_n}(\alpha)-\Eoo\widehat{P_n}(1_{t\leq\alpha})|^+)\leq C((\log\frac{1}{\delta})^{\frac{3}{2}}\frac{\sqrt{\Delta_n}}{n}+\delta)$$ and $$\Eoo(\sup_{\alpha}|\widehat{Q_n}(\alpha)-\Eoo\widehat{P_n}(1_{t\leq\alpha})|^-)\leq C((\log\frac{1}{\delta})^{\frac{3}{2}}\frac{\sqrt{\Delta_n}}{n}+\delta)$$ A useful observation is that $$\widehat{Q_n}(\alpha-2\delta)\leq\widehat{P_n}(1_{t\leq\alpha})\leq\widehat{Q_n}(\alpha + 2\delta)$$ which implies 
\begin{equation*}
\begin{aligned}
\Eoo(\sup_{\alpha}|\widehat{P_n}(1_{t\leq\alpha})-\Eoo\widehat{P_n}(1_{t\leq\alpha+2\delta})|^+)&\leq \Eoo(\sup_{\alpha}|\widehat{Q_n}(\alpha+2\delta)-\Eoo\widehat{P_n}(1_{t\leq\alpha+2\delta})|^+)\\
&\leq C((\log\frac{1}{\delta})^{\frac{3}{2}}\frac{\sqrt{\Delta_n}}{n}+\delta)
\end{aligned} 
\end{equation*}
and
\begin{equation*}
\begin{aligned}
\Eoo(\sup_{\alpha}|\widehat{P_n}(1_{t\leq\alpha})-\Eoo\widehat{P_n}(1_{t\leq\alpha-2\delta})|^-)&\leq \Eoo(\sup_{\alpha}|\widehat{Q_n}(\alpha-2\delta)-\Eoo\widehat{P_n}(1_{t\leq\alpha-2\delta})|^-)\\
&\leq C((\log\frac{1}{\delta})^{\frac{3}{2}}\frac{\sqrt{\Delta_n}}{n}+\delta)
\end{aligned} 
\end{equation*}
By the definition of $\widehat{P_n}$, we have $$\Eoo\widehat{P_n}(1_{t\leq\alpha+2\delta})\leq \Eoo\widehat{P_n}(1_{t\leq\alpha})+2\delta$$ and $$\Eoo\widehat{P_n}(1_{t\leq\alpha-2\delta})\geq \Eoo\widehat{P_n}(1_{t\leq\alpha})-2\delta.$$ So, 
\begin{equation*}
\begin{aligned}
\Eoo(\sup_{\alpha}|\widehat{P_n}(1_{t\leq\alpha})-\Eoo\widehat{P_n}(1_{t\leq\alpha})|^+)&\leq \Eoo\sup_{\alpha}(|\widehat{P_n}(1_{t\leq\alpha})-\Eoo\widehat{P_n}(1_{t\leq\alpha+2\delta})|+2\delta)^+\\
&\leq \Eoo\sup_{\alpha}(|\widehat{P_n}(1_{t\leq\alpha})-\Eoo\widehat{P_n}(1_{t\leq\alpha+2\delta})|^++2\delta\\
&\leq C((\log\frac{1}{\delta})^{\frac{3}{2}}\frac{\sqrt{\Delta_n}}{n}+\delta)
\end{aligned} 
\end{equation*}
and
\begin{equation*}
\begin{aligned}
\Eoo(\sup_{\alpha}|\widehat{P_n}(1_{t\leq\alpha})-\Eoo\widehat{P_n}(1_{t\leq\alpha})|^-)&\leq \Eoo\sup_{\alpha}(|\widehat{P_n}(1_{t\leq\alpha})-\Eoo\widehat{P_n}(1_{t\leq\alpha-2\delta})|+2\delta)^-\\
&\leq \Eoo\sup_{\alpha}(|\widehat{P_n}(1_{t\leq\alpha})-\Eoo\widehat{P_n}(1_{t\leq\alpha-2\delta})|^-+2\delta\\
&\leq C((\log\frac{1}{\delta})^{\frac{3}{2}}\frac{\sqrt{\Delta_n}}{n}+\delta).
\end{aligned} 
\end{equation*}

Combining the two inequalities above, we obtain 
\begin{equation*}
\begin{aligned}
\Eoo(\sup_{\alpha}|\widehat{P_n}(1_{t\leq\alpha})-\Eoo\widehat{P_n}(1_{t\leq\alpha})|)\leq &\Eoo(\sup_{\alpha}|\widehat{P_n}(1_{t\leq\alpha}) -\Eoo\widehat{P_n}(1_{t\leq\alpha})|^+) \\
&\quad +\Eoo(\sup_{\alpha}|\widehat{P_n}(1_{t\leq\alpha})-\Eoo\widehat{P_n}(1_{t\leq\alpha})|^-)\\
 \leq &C((\log\frac{1}{\delta})^{\frac{3}{2}}\frac{\sqrt{\Delta_n}}{n}+\delta).
\end{aligned} 
\end{equation*}
\end{proof}

\begin{theorem}
For any $0<\varepsilon<1$, the size of the empirical process satisfies $$\Eoo(\sup_{f\in F}|\widehat{P_n}(f)|)\sim o((\frac{\sqrt{\Delta_n}}{n})^{\varepsilon}).$$
\end{theorem}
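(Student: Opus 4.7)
The plan is to deduce the theorem from Lemma \ref{kl} by choosing an $n$-dependent $\delta$ that tends to $0$. Writing $a_n := \sqrt{\Delta_n}/n$, the natural regime is $a_n \to 0$ (otherwise the target bound $o(a_n^\varepsilon)$ is vacuous), so for large $n$ any sufficiently small fixed positive constant is beaten by $a_n$ and we may legitimately substitute $\delta = \delta_n$ with $\delta_n \to 0$ into the lemma.

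Concretely, I would take $\delta_n := a_n$. Substituting into Lemma \ref{kl} yields
\begin{equation*}
\Eoo\Bigl(\sup_{f\in F}|\widehat{P_n}(f)|\Bigr) \;\leq\; C\bigl((\log\tfrac{1}{a_n})^{3/2} a_n + a_n\bigr) \;\leq\; C' a_n \bigl(1+\log\tfrac{1}{a_n}\bigr)^{3/2}.
\end{equation*}
Dividing by $a_n^{\varepsilon}$ reduces the theorem to the elementary limit
\begin{equation*}
\lim_{x\to 0^{+}} x^{1-\varepsilon}\bigl(1+\log\tfrac{1}{x}\bigr)^{3/2}\;=\;0,
\end{equation*}
which is immediate for any fixed $\varepsilon \in (0,1)$ since a strictly positive power of $x$ dominates any polylogarithmic factor. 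Plugging $x = a_n$ then gives $\Eoo\sup_f |\widehat{P_n}(f)| = o(a_n^{\varepsilon})$, as required.

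There is no genuine obstacle here beyond the limit computation; all of the real work is carried out in Lemma \ref{kl} and, before it, in Lemma \ref{le222}. The present theorem simply packages the bound $a_n(\log 1/a_n)^{3/2}$ into the cleaner polynomial form $a_n^{\varepsilon}$. If one cared about the sharper statement, the "balanced" choice $\delta_n \asymp a_n(\log 1/a_n)^{3/2}$ would give the near-optimal rate $\Eoo\sup_f|\widehat{P_n}(f)| = O\bigl(a_n(\log 1/a_n)^{3/2}\bigr)$, which still lies strictly below $a_n^{\varepsilon}$ for every $\varepsilon < 1$ and therefore confirms the $o$-statement with room to spare.
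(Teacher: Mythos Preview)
Your proof is correct and follows the same overall strategy as the paper: both start from Lemma \ref{kl} and choose $\delta$ as a function of $a_n = \sqrt{\Delta_n}/n$ so that the right-hand side is $o(a_n^{\varepsilon})$. The only difference is in the optimization step: the paper substitutes $\delta = e^{-x}$, bounds $e^{-x}\le c(p)x^{-p}$, and minimizes to obtain the intermediate rate $a_n^{2p/(2p+3)}$ before letting $p\to\infty$; you instead take $\delta_n = a_n$ directly and appeal to the elementary limit $x^{1-\varepsilon}(\log 1/x)^{3/2}\to 0$. Your route is shorter and in fact yields the slightly sharper explicit bound $O\bigl(a_n(\log 1/a_n)^{3/2}\bigr)$, whereas the paper's detour through the polynomial bound on $e^{-x}$ only recovers exponents strictly below $1$; both, of course, suffice for the stated $o(a_n^{\varepsilon})$ conclusion.
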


\begin{proof}
By Lemma \ref{kl}, we have 
\begin{equation}\label{ff}
\forall\delta > 0, \Eoo(\sup_{f\in F}|\widehat{P_n}(f)|)\leq C((\log\frac{1}{\delta})^{\frac{3}{2}}\frac{\sqrt{\Delta_n}}{n}+\delta).
\end{equation}
By transforming the right-hand side of (\ref{ff}) to a better form, we need to estimate the order of $$x^{\frac{3}{2}}\frac{\sqrt{\Delta_n}}{n}+e^{-x}.$$ Since there exists a constant $c(p)$ for every $p\in\mathbb{N}$ such that $e^{-x}\leq c(p)x^{-p}$, we obtain $$x^{\frac{3}{2}}\frac{\sqrt{\Delta_n}}{n}+e^{-x}\leq C(\frac{\sqrt{\Delta_n}}{n})^{\frac{2p}{2p+3}}$$ by assigning a proper value to $x$. The proof is completed as $p$ can be taken arbitrarily.
\end{proof}

\appendix
\section{Appendix}

\begin{flushleft} {\it Proof of Theorem \ref{pro1}.}\ \ For a real number $u$, let $g(u) = \exp(- \frac{1}{2}u^2)$. For $t\in \mathbb{R}$,
\begin{equation*}
\begin{aligned}
g(u)&=\sum_{k=0}^{\infty}\frac{g^{(k)}(t)}{k!}(u-t)^k =\sum_{k=0}^{\infty}\frac{\sqrt{k!}}{(-1)^k}\exp(-\frac{t^2}{2})H_k(t)\frac{1}{k!}(u-t)^k\\
&=\exp(-\frac{t^2}{2})\sum_{k=0}^{\infty}\frac{(-1)^k}{\sqrt{k!}}H_k(t)(u-t)^k
\end{aligned} 
\end{equation*}
Hence, for real $\lambda$ and $t$,
\begin{equation*}
\begin{aligned}
\exp(\lambda t - \frac{1}{2}\lambda^2) &= \exp(\frac{1}{2}t^2 - \frac{1}{2}(\lambda^2-t^2)) = \exp(\frac{1}{2}t^2)g(t-\lambda)\\
&= \exp(\frac{1}{2}t^2)\exp(-\frac{t^2}{2})\sum_{k=0}^{\infty}\frac{(-1)^k}{\sqrt{k!}}H_k(t)(-\lambda)^k
= \sum_{k=0}^{\infty}\frac{1}{\sqrt{k!}}H_k(t)\lambda^k. \qquad \Box
\end{aligned} 
\end{equation*}

\end{flushleft}

\begin{flushleft} {\it Proof of Theorem \ref{pro2}}\ \ 
For any $\lambda, \mu\in \mathbb{R}$, on the one hand,
\begin{equation*}
\begin{aligned}
\int_{\mathbb{R}}\exp(\lambda t - \frac{\lambda^2\sigma^2}{2})\exp(\mu t - \frac{\mu^2\sigma^2}{2})d\gamma 
=&e^{\sigma^2\lambda\mu}\int_{\mathbb{R}}\frac{1}{\sqrt{2\pi} \sigma}\exp(-\frac{(t-\sigma^2(\lambda+\mu))^2}{2\sigma^2})dt.\\
=&e^{\sigma^2\lambda\mu}.
\end{aligned} 
\end{equation*}
On the other hand, substituting $t$ and $\lambda$ by $\frac{t}{\sigma}$ and $\sigma\lambda$ in Theorem \ref{pro1}, we have

\begin{equation*}
\begin{aligned}
\int_{\mathbb{R}}\exp(\lambda t - \frac{\lambda^2\sigma^2}{2})\exp(\mu t - \frac{\mu^2\sigma^2}{2})d\gamma
=&\int_{\mathbb{R}}(\sum_{k=0}^{\infty}\frac{1}{\sqrt{k!}}H_k(\frac{t}{\sigma})\lambda^k\sigma^k)(\sum_{l=0}^{\infty}\frac{1}{\sqrt{l!}}H_l(\frac{t}{\sigma})\mu^l\sigma^l)d\gamma\\
=&\int_{\mathbb{R}}(\sum_{k,l\geq 0}\frac{1}{\sqrt{k!l!}}\sigma^{k+l}\lambda^k\mu^lH_k(\frac{t}{\sigma})H_l(\frac{t}{\sigma}))d\gamma\\
=&\sum_{k,l\geq 0}\frac{1}{\sqrt{k!l!}}\sigma^{k+l}\lambda^k\mu^l\langle H_k(\frac{t}{\sigma}),H_l(\frac{t}{\sigma})\rangle_{L^2(\gamma)}.
\end{aligned} 
\end{equation*}
Therefore, $$e^{\sigma^2\lambda\mu} = \sum_{k,l\geq 0}\frac{1}{\sqrt{k!l!}}\sigma^{k+l}\lambda^k\mu^l\langle H_k(\frac{t}{\sigma}),H_l(\frac{t}{\sigma})\rangle_{L^2(\gamma)}.$$ This completes the proof.\hspace{\fill} $\square$
\end{flushleft}

\begin{flushleft} {\it Proof of Theorem \ref{pro3}}\ \ 
Simply adopt the same routine as the proof of \ref{pro2}.\hspace{\fill} $\square$
\end{flushleft}

\end{document}